\newlength\shlength
\newcommand\xshlongvec[2][0]{\setlength\shlength{#1pt}%
  \stackengine{-5.6pt}{$#2$}{\smash{$\kern\shlength%
    \stackengine{7.55pt}{$\mathchar"017E$}%
      {\rule{\widthof{$#2$}}{.57pt}\kern.4pt}{O}{r}{F}{F}{L}\kern-\shlength$}}%
      {O}{c}{F}{T}{S}}
\theoremstyle{definition}
\newtheorem{theorem}{Theorem}[section]
\newtheorem{lemma}[theorem]{Lemma}
\newtheorem{proposition}[theorem]{Proposition}
\newtheorem{definition}[theorem]{Definition}
\newtheorem{corollary}[theorem]{Corollary}
\newtheorem*{remark*}{Remark}
\newtheorem{example}[theorem]{Example}
\newcommand\N{\mathbb N}
\newcommand\K{\mathbb K}
\newcommand\ce{r}
\newcommand\Atc{A^{\pm}}
\newcommand\Atcp{A^+}
\newcommand\Atcm{A^-}
\title[Matrices similar to centrosymmetric matrices]{Matrices similar to centrosymmetric matrices}
\author{Benjam\'in A. Itz\'a-Ortiz}
\author{Rub\'en A. Mart\'inez-Avenda\~no}
\address{Centro de Investigaci\'on en Matem\'aticas, Universidad Aut\'onoma del Estado de Hidalgo, Pachuca, Hidalgo, Mexico}
\address{Departamento Acad\'emico de Matem\'aticas, Instituto Tecnol\'ogico Aut\'onomo de M\'exico, Mexico City, Mexico}
\email{itza@uaeh.edu.mx}
\email{ruben.martinez.avendano@gmail.com}
\thanks{\\
This paper was motivated by the factorization of determinants used in \cite{IMN2}, which arose from an idea by Prof. Hiroshi Nakazato. The authors would like to thank Prof. Nakazato for sharing his insights with us and for all the help provided throughout the investigation that resulted in this paper.
\\
The authors are thankful to the reviewer for his/her careful reading of this paper and for the comments, which helped to improve the manuscript. 
\\
The second author's research is partially supported by the Asociaci\'on Mexicana de Cultura A.C}
\subjclass{15B99, 15A15, 15A24}
\keywords{centrosymmetric matrices, determinants, similarity, matrix equations}
\begin{document}
\begin{abstract}
In this paper we give conditions on a matrix which guarantee that it is similar to a centrosymmetric matrix. We use this conditions to show that some $4 \times 4$ and $6 \times 6$ Toeplitz matrices are similar to centrosymmetric matrices. Furthermore, we give conditions for a matrix to be similar to a matrix which has a centrosymmetric principal submatrix, and conditions under which a matrix can be dilated to a matrix similar to a centrosymmetric matrix.
\end{abstract}

\maketitle

\setlength\arraycolsep{2pt}

\section*{Introduction}

A matrix is {\em centrosymmetric} if it is ``symmetric about its center'' \cite{weaver}; that is, if it remains unchanged if we reflect it horizontally and vertically (see Definition~\ref{def:centro}). Although the term ``centrosymmetric'' was first introduced by Aitken in his comprehensive study of determinants~\cite{aitken}, 
the study of these type of matrices seems to have started with the study of the so-called Sylvester-Kac determinant (see \cite{TaTo}). Centrosymmetric matrices appear naturally in many places and have several applications (see the list given in the introduction of \cite{CaBu} for some of them), particularly in the study of Markov processes (see, for example, \cite{weaver}). Centrosymmetric matrices have many interesting properties: we refer the reader to the papers of Abu-Jeib~\cite{abu}, Cantoni and Butler~\cite{CaBu}, Good~\cite{good} and Weaver~\cite{weaver} for some of them.

Since many matrix-theoretic properties are preserved under similarity, finding conditions under which a matrix is similar to a centrosymmetric matrix is a natural question. In this paper, we show that if we write a matrix $M$ as a block-matrix and a system of matrix equations built from these blocks admits an invertible solution, then the matrix $M$ is similar to a centrosymmetric matrix. Of course, if the matrix $M$ itself is centrosymmetric, it is well-known that a solution to this system is a concrete permutation matrix; it turns out that whenever this permutation matrix is a solution, the matrix $M$ itself is centrosymmetric, a known fact (see, e.g. \cite{aitken}). Furthermore, in this paper we show that if the solution to this system of matrix equations is not invertible, then, depending on the rank of $M$, we have that $M$ is similar to a matrix containing a centrosymmetric principal submatrix or $M$ can be dilated to a matrix which is similar to a centrosymmetric matrix.

It is worth observing that a solution to the system of equations mentioned above is also a solution to an algebraic matrix Riccati equation (see, for example, \cite{Meyer} for a discussion of the algebraic matrix Riccati equation). But in general, it is not true that having a solution to the algebraic Riccati equation, implies that the matrix $M$ is similar to a centrosymmetric matrix, as we will show later in this paper.

We now describe the contents of this paper. In Section~\ref{se:similar}, after giving a well-known characterization of centrosymmetric matrices, we show that if we form a system of equations with the blocks of the matrix $M$ and this system has an invertible solution, then $M$ is similar to a centrosymmetric matrix; furthermore, in the case where such solution is unitary, then $M$ is unitarily equivalent to a centrosymmetric matrix.
We use this theorem to show that some Toeplitz matrices are similar to centrosymmetric matrices.
We also show that if the solution $X$ is not invertible, then depending on the rank of $X$, the matrix $M$ is similar to a matrix which has a centrosymmetric principal submatrix or the matrix $M$ can be dilated to a matrix which is similar to a centrosymmetric matrix (it may be even unitarily equivalent to one).
 
It is well-known that if the Riccati equation has a solution, then the determinant of the matrix $M$ can be computed as the product of two smaller determinants. Motivated by this fact and by the connection of our results to the Riccati equation, in Section~\ref{se:fact}, we show that the matrix $M$ is singular if there is a nonzero solution to an specific system of equations (which will also be a solution to the Riccati equation). We end this section by posing a question on the uniqueness of the factorization of the determinant of $M$, whenever two different Riccati equations are satisfied.

 We finish by proving, in Section~\ref{se:exa}, two results which are used in \cite{IMN2} and which originally motivated the results in this paper.

\section{Similarity to Centrosymmetric Matrices}\label{se:similar}

Throughout this paper, $\K$ will denote an arbitrary field (not necessarily of characteristic zero) and all matrices will have entries in the field $\K$. As usual, $\N$ will denote the set of positive integers and $I=I_n$ will denote the identity matrix of size $n \times n$.

The following definition is classical (e.g, \cite{aitken}).

\begin{definition}\label{def:centro}
Let $n \in \N$ and let $M$ be an $n\times n$ matrix. We say that $M$ is {\em centrosymmetric} if $m_{i,j}=m_{n-i+1,n-j+1}$ for $i,j=1,2, \dots, n$, where $m_{i,j}$ denotes the $(i,j)$-th entry of $M$.
\end{definition}

We denote by $J_1$ the $1\times 1$ matrix $J_1=\left( 1 \right)$ and for $n \in \N$, with $n \geq 2$, we denote by $J_n$ the $n\times n$ centrosymmetric matrix 
\[
J_n=\begin{pmatrix}
0 & 0 & \cdots & 0 & 1 \\
0 & 0 & \cdots & 1 & 0 \\
\vdots & \vdots & \iddots & \vdots & \vdots \\
0 & 1 & \cdots & 0 & 0 \\
1 & 0 & \cdots & 0 & 0 \\
\end{pmatrix}.
\]

When there is no confusion about the size, we will denote this matrix by $J$, dropping the subindex $n$. It is easy to check that a square matrix $M$ is centrosymmetric if and only if $M J = J M$ (see, for example, \cite[Proposition 6]{weaver}). Using this fact, the following lemma is straightforward (\cite[p.~124]{aitken}).

\begin{lemma}\label{le:char_centrosym}
Let $n \in \N$ and let $M$ be a $n\times n$ matrix. 
\begin{itemize}
    \item Assume $n$ is even. Then $M$ is centrosymmetric if and only if $M$ can be written as a block matrix of the form
\[
M= \left( \begin{array}{c|c}
A & B \\
\hline 
C & D 
\end{array}
\right)
\]
where $A,B,C,D$ are all $\frac{n}{2} \times \frac{n}{2}$ matrices, $J A=D J$ and $C=J B J$, where $J=J_{\frac{n}{2}}$.

\item Assume $n\geq 3$ is odd. Then $M$ is centrosymmetric if and only if $M$ can be written as a block matrix of the form
\[
M= \left( \begin{array}{c|c|c}
A & x & B \\
\hline
y & \mu & z \\
\hline
C & w & D 
\end{array}
\right)
\]
where $A,B,C,D$ are all $\frac{n-1}{2} \times \frac{n-1}{2}$ matrices, where $x, w$ are $\frac{n-1}{2} \times 1$ matrices, where $y, z$ are $1 \times \frac{n-1}{2}$ matrices, $\mu \in \K$, and $J A= D J$,  $C=J B J$, $w = J x$ and $y = z J$, where $J=J_{\frac{n-1}{2}}$.
\end{itemize}
\end{lemma}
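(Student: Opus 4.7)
The plan is to apply the stated characterization $M \text{ centrosymmetric} \iff MJ = JM$ (where $J = J_n$) and simply read off the equivalent block conditions. The only real work is bookkeeping; there is no conceptual obstacle, which is why the authors call it ``straightforward.''

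First I would write $J_n$ itself in block form, compatible with the partition given for $M$. In the even case $n = 2k$, the antidiagonal structure of $J_n$ gives
\[
J_n \;=\; \left( \begin{array}{c|c} 0 & J_k \\ \hline J_k & 0 \end{array}\right),
\]
and in the odd case $n = 2k+1$, writing $k = (n-1)/2$,
\[
J_n \;=\; \left( \begin{array}{c|c|c} 0 & 0 & J_k \\ \hline 0 & 1 & 0 \\ \hline J_k & 0 & 0 \end{array}\right).
\]
Note that all off-diagonal blocks in these matrices vanish, which will make the block multiplications very clean.

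Next I would compute $MJ$ and $JM$ block by block in each case. In the even case this yields
\[
MJ = \left(\begin{array}{c|c} BJ & AJ \\ \hline DJ & CJ \end{array}\right), \qquad
JM = \left(\begin{array}{c|c} JC & JD \\ \hline JA & JB \end{array}\right),
\]
and equating corresponding blocks produces $AJ = JD$ and $BJ = JC$, with the remaining two equations redundant. Using the involution identity $J^2 = I_k$ (which follows directly from the definition of $J_k$), these rewrite as $JA = DJ$ and $C = JBJ$, exactly the conditions in the statement. In the odd case the $3\times 3$ block product is a little longer but identical in spirit: equating the nine pairs of blocks gives $BJ = JC$, $AJ = JD$, $x = Jw$, $zJ = y$ and their transposed counterparts, and again multiplying by $J$ on the appropriate side and invoking $J^2 = I$ recovers $C = JBJ$, $JA = DJ$, $w = Jx$ and $y = zJ$.

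The only step that requires a little care is checking that the ``duplicate'' equations (e.g.\ $DJ = JA$ versus $AJ = JD$, or $yJ = z$ versus $zJ = y$) are genuinely equivalent to the ones listed, rather than imposing extra constraints; this uses $J^2 = I_k$ once more. Once those redundancies are verified, the two directions of the biconditional follow simultaneously, and the lemma is proved. The main ``obstacle'' is therefore just the clerical work of laying out the block products carefully and keeping track of which side of $J$ appears where; there is no genuine difficulty.
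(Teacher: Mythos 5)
Your proof is correct and follows exactly the route the paper intends: the paper omits the proof, citing only the characterization $MJ=JM$, and your block computation of $MJ$ and $JM$ with the block form of $J_n$ (together with $J^2=I$ to collapse the redundant equations) is the straightforward argument being alluded to. One minor wording slip: in your block decompositions of $J_n$ it is the blocks \emph{off the antidiagonal} that vanish, not the off-diagonal blocks; the displayed matrices and all subsequent computations are nonetheless correct.
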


If we replace the matrix $J$ in the previous lemma by an arbitrary invertible matrix $X$ we show, in the next theorem, that we obtain matrices similar to centrosymmetric matrices. 

\begin{theorem}\label{th:sim_cen}
Let $n \in \N$ and let $M$ be an $n\times n$ matrix.
\begin{itemize}
    \item Assume $n$ is even and $M$ is written as
    \[
M= \left( \begin{array}{c|c}
A & B \\
\hline 
C & D 
\end{array}
\right),
\]
where $A$, $B$, $C$ and $D$ are $\frac{n}{2} \times \frac{n}{2}$ matrices. Assume there exists an invertible matrix $X$, of size $\frac{n}{2} \times \frac{n}{2}$, such that $XA=DX$ and $C=XBX$. Then $M$ is similar to a centrosymmetric matrix.

\item  Assume $n \geq 3$ is odd and $M$ is written as
\[
M= \left( \begin{array}{c|c|c}
A & x & B \\
\hline
y & \mu & z \\
\hline
C & w & D 
\end{array}
\right)
\]
where $A,B,C,D$ are all $\frac{n-1}{2} \times \frac{n-1}{2}$ matrices, where $x, w$ are $\frac{n-1}{2} \times 1$ matrices, where $y, z$ are $1 \times \frac{n-1}{2}$ matrices, and $\mu \in \K$. Assume there exists an invertible matrix $X$, of size $\frac{n-1}{2} \times \frac{n-1}{2}$, such that $XA=DX$,  $C=XBX$, $w=X x$ and $y=z X$. Then $M$ is similar to a centrosymmetric matrix.
\end{itemize}
\end{theorem}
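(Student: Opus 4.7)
The plan is to produce an explicit similarity that converts the hypothesized equations into the ones characterizing centrosymmetric block matrices in Lemma~\ref{le:char_centrosym}. Since the given equations differ from the centrosymmetric ones only in that the matrix $X$ appears in place of $J$, the natural attempt is to conjugate by a block-diagonal matrix which is the identity on the top block and that sends $X$ to $J$ on the bottom block.

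Concretely, for the even case I would set
\[
P= \left( \begin{array}{c|c}
I & 0 \\
\hline
0 & XJ
\end{array}\right),
\]
which is invertible because both $X$ and $J$ are invertible (recall $J^2=I$, so $(XJ)^{-1}=JX^{-1}$). A direct block computation gives
\[
P^{-1}MP=\left(\begin{array}{c|c}
A & BXJ \\
\hline
JX^{-1}C & JX^{-1}DXJ
\end{array}\right).
\]
To show this is centrosymmetric, I would apply Lemma~\ref{le:char_centrosym}. Writing $A'=A$, $B'=BXJ$, $C'=JX^{-1}C$, $D'=JX^{-1}DXJ$, the required identities become $JA'=D'J$ and $C'=JB'J$. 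Using $J^2=I$, the first rearranges to $XA=DX$, and the second rearranges to $C=XBX$, both of which hold by hypothesis. Thus $P^{-1}MP$ is centrosymmetric.

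For the odd case I would use the analogous conjugator
\[
P= \left( \begin{array}{c|c|c}
I & 0 & 0 \\
\hline
0 & 1 & 0 \\
\hline
0 & 0 & XJ
\end{array}\right),
\]
and compute $P^{-1}MP$ block by block. The four conditions from the odd part of Lemma~\ref{le:char_centrosym} will reduce, via the same $J^2=I$ manipulation, exactly to the four hypotheses $XA=DX$, $C=XBX$, $w=Xx$, and $y=zX$.

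There is no real obstacle: the entire argument is a guided block-matrix calculation. The only piece of insight needed is the correct choice of $P$ (namely, sandwiching a factor of $J$ next to $X$ so that $X$ is \emph{translated} to $J$ rather than conjugated to it), after which every check reduces to one of the given equations via $J^2=I$. I would present the even case in full and remark that the odd case is identical after accommodating the extra row and column.
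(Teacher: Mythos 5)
Your proposal is correct and follows essentially the same route as the paper: the paper's proof conjugates by exactly the block matrix $\left(\begin{smallmatrix} I & 0 \\ 0 & XJ\end{smallmatrix}\right)$ (and its three-block analogue in the odd case) and then verifies the centrosymmetry conditions of Lemma~\ref{le:char_centrosym} using $J^2=I$, just as you describe. No differences worth noting.
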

\begin{proof}
We prove the case where $n$ is even first. Let $Q$ be the block matrix
\[
\left(
\begin{array}{c|c}
I & 0 \\
\hline
0 &  X J
\end{array}
\right),
\]
where $I=I_{\frac{n}{2}}$ and $J=J_{\frac{n}{2}}$. Then 
\[
Q^{-1} M Q = \left(
\begin{array}{c|c}
I & 0 \\ 
\hline
0 &  J X^{-1}
\end{array}
\right) 
\left( \begin{array}{c|c}
A & B \\
\hline 
C & D 
\end{array}
\right)
\left(
\begin{array}{c|c}
I & 0 \\
\hline
0 &  X J
\end{array}
\right)=
\left(
\begin{array}{c|c}
A & B X J \\
\hline
J X^{-1} C & J X^{-1} D X J
\end{array}
\right).
\]
Observe that
\[
XA = DX \implies  A = X^{-1} D X \implies JA = J (X^{-1} D X) \implies JA = (J X^{-1} D X J) J
\]
and 
\[
XBX=C \implies BX = X^{-1} C  \implies J (BX) = J(X^{-1} C)  \implies J (BX J) J = J X^{-1} C
\]
so by Lemma~\ref{le:char_centrosym} we obtain that $Q^{-1} M Q$ is centrosymmetric. 

Now, if $n$ is odd, let $Q$ be the block matrix
\[
\left(
\begin{array}{c|c|c}
I & 0 & 0 \\
\hline
0 & 1 & 0
\\
\hline
0 &  0 & X J
\end{array}
\right),
\]
where $I=I_{\frac{n-1}{2}}$, $J=J_{\frac{n-1}{2}}$ and the central block is the element $1 \in \K$. Then 
\[
\left(
\begin{array}{c|c|c}
I & 0 & 0 \\
\hline
0 & 1 & 0
\\
\hline
0 &  0 & J X^{-1}
\end{array}
\right)
\left( \begin{array}{c|c|c}
A & x & B \\
\hline
y & \mu & z \\
\hline
C & w & D 
\end{array}
\right)
\left(
\begin{array}{c|c|c}
I & 0 & 0 \\
\hline
0 & 1 & 0
\\
\hline
0 &  0 & X J
\end{array}
\right)
=
\left( \begin{array}{c|c|c}
A & x & B X J \\
\hline
y & \mu & z X J\\
\hline
J X^{-1} C & J X^{-1} w & J X^{-1} D X J 
\end{array}
\right)
\]
Observe that by the same computation as before we have that if $XA=DX$ then $J A = ( J X^{-1} D X J ) J$ and if $XBX=C$ then $J (BX J) J = J X^{-1} C$. But also, we have
\[
w=Xx \implies x = X^{-1} w \implies x = J (J X^{-1} w) \quad \text{ and } \quad
y=zX \implies y= (z X J)J.
\]
Hence, by Lemma~\ref{le:char_centrosym}, the matrix $Q^{-1}M Q$ is centrosymmetric.
\end{proof}

\vskip0.2cm

{\bf Remark:} Observe that if $\K$ is the field of complex numbers, then in the theorem above, if $X$ is a unitary matrix, it follows that the matrix $Q$ in the proof is also unitary and hence $M$ is unitarily equivalent to a centrosymmetric matrix.

\vskip0.2cm

It is natural to ask if the conditions in the above theorem are necessary. They are not. For example, if $\K$ is the field of complex numbers, consider the $2 \times 2$ matrix
\[
M=\begin{pmatrix}
1 & 3 \\
2 & 2
\end{pmatrix},
\]
for which there is no $1\times 1$ invertible matrix $X$ satisfying $XA=DX$ and $C=XBX$. However, $M$ is similar to the centrosymmetric matrix
\[
\begin{pmatrix}
\frac{3}{2} & \frac{5}{2} \\[3pt]
\frac{5}{2} & \frac{3}{2} 
\end{pmatrix},
\]
as can be easily seen, for example by noticing that the set of eigenvalues of both matrices is $\{ -1, 4\}$.

In fact, it is not hard to show that if $\K$ is the field of complex numbers, a $2 \times 2$ matrix is similar to a centrosymmetric matrix if and only if it is either a multiple of the identity or it has two distinct eigenvalues.

The following two examples show an application of Theorem~\ref{th:sim_cen} to certain Toeplitz matrices.

\begin{example}
Let $\alpha$ be a complex number and let $M$ be the $4\times 4$ Toeplitz matrix
\[
M=\begin{pmatrix}
\alpha & & \alpha -1 & &\alpha-2 & &\alpha-3 \\ 
\alpha+1 & & \alpha & &\alpha-1 & &\alpha-2 \\ 
\alpha+2 & &\alpha +1 & & \alpha & &\alpha-1 \\ 
\alpha+3 & &\alpha+2 & &\alpha+1 & &\alpha
\end{pmatrix}
\]
Let $A$, $B$, $C$ and $D$ be the $2 \times 2$ matrices
\[
A=D=\begin{pmatrix}
\alpha & & \alpha -1 \\
\alpha+1 & & \alpha
\end{pmatrix},
\quad
B=\begin{pmatrix}
\alpha-2 & &\alpha-3 \\ 
\alpha-1 & &\alpha-2
\end{pmatrix},
\quad \text{ and } \quad
C=\begin{pmatrix}
\alpha+2 & &\alpha +1 \\ 
\alpha+3 & &\alpha+2
\end{pmatrix}.
\]
If $\alpha \neq \pm \sqrt{5}$, we define $X$ to be the $2 \times 2$ matrix
\[
X=\frac{1}{\sqrt{\alpha^2-5}} \begin{pmatrix}
2 & \alpha -1 \\
\alpha+1 & 2
\end{pmatrix}.
\]
It is straightforward to check that $XA=DX$, that $C=X B X$ and that $X$ is invertible. Hence, by Theorem~\ref{th:sim_cen}, the matrix $M$ is similar to a centrosymmetric matrix. 

It is interesting to notice that if $\alpha=\pm \sqrt{5}$ the only two solutions to the matrix equation $C=X B X$ do not satisfy $XA=DX$ and hence our method does not answer the question of similarity of $M$ to a centrosymmetric matrix in this case.
\end{example}

\begin{example}
Let $\alpha$ be a complex number and let $M$ be the $6\times 6$ Toeplitz matrix
\[
M=\begin{pmatrix}
\alpha & & \alpha -1 & &\alpha-2 & &\alpha-3 & &\alpha-4 & &\alpha-5 \\ 
\alpha+1 & & \alpha & &\alpha-1 & &\alpha-2 & &\alpha-3 & &\alpha-4 \\ 
\alpha+2 & &\alpha +1 & & \alpha & &\alpha-1 & &\alpha-2 & &\alpha-3 \\ 
\alpha+3 & &\alpha+2 & &\alpha+1 & &\alpha & &\alpha-1 & &\alpha-2 \\
\alpha+4 & &\alpha+3 & &\alpha+2 & &\alpha+1 & &\alpha & &\alpha-1\\
\alpha+5 & &\alpha+4 & &\alpha+3 & &\alpha+2 & &\alpha+1 & &\alpha
\end{pmatrix}
\]
Let $A$, $B$, $C$ and $D$ be the $3 \times 3$ matrices
\[
A=D=\begin{pmatrix}
\alpha & & \alpha -1 & & \alpha-2 \\
\alpha+1 & & \alpha & & \alpha -1 \\
\alpha+2 & & \alpha+1 & & \alpha
\end{pmatrix},
\quad
B=\begin{pmatrix}
\alpha-3 & & \alpha-4 & & \alpha-5 \\ 
\alpha-2 & & \alpha-3 & & \alpha-4 \\ 
\alpha-1 & & \alpha-2 & & \alpha-3
\end{pmatrix},
\quad \text{ and } \quad
C=\begin{pmatrix}
\alpha+3 & & \alpha+2 & &\alpha +1 \\ 
\alpha+4 & & \alpha+3 & &\alpha +2 \\ 
\alpha+5 & & \alpha+4 & &\alpha +3
\end{pmatrix}.
\]
If $\alpha \neq \pm \sqrt{\frac{35}{3}}$, we define $X$ to be the $3 \times 3$ matrix
\[ 
X=\frac{1}{\sqrt{9 \alpha^2-105}} \begin{pmatrix}
0 & 16 & 3\alpha-13\\
20 & 3 (\alpha-9) & 16 \\
3\alpha -5 & 20 & 0
\end{pmatrix}.
\]
It is straightforward to check that $XA=DX$, that $C=X B X$ and that $X$ is invertible if $\alpha \neq 15$. Hence, by Theorem~\ref{th:sim_cen}, the matrix $M$ is similar to a centrosymmetric matrix in this case. If $\alpha=15$, we can use instead the invertible matrix
\[
X=\frac{1}{16 \sqrt{30}}
\begin{pmatrix}
-9 & & 50 & & 55 \\
58 & & 0 & & 50 \\
71 & & 58 & & -9
\end{pmatrix},
\]
which shows that $M$ is similar to a centrosymmetric matrix in this case as well.

It is interesting to notice that if $\alpha=\pm \sqrt{\frac{35}{3}}$ the only two solutions to the matrix equation $C=X B X$ do not satisfy $XA=DX$ and hence our method does not answer the question of similarity of $M$ to a centrosymmetric matrix in this case, as before.
\end{example}

The examples above show that the Toeplitz matrices $M$ of sizes $4\times 4$ and $6\times 6$ are similar to centrosymmetric matrices for all values of $\alpha$, except for perhaps two values in each case. It is easy to check that in the $2\times 2$ case, the matrix $M$ is similar to a centrosymetric matrix, by Theorem~\ref{th:sim_cen}, except for the values $\alpha=\pm 1$; in fact, for these values of $\alpha$ the matrix $M$ is not similar to a centrosymmetric matrix. 
It would be interesting to know if  for all sizes of the matrix $M$, it is similar to a centrosymmetric matrix for all values of $\alpha$ except for two. We leave this question open for future research.

\vskip0.5cm

In the next theorem, we generalize Theorem~\ref{th:sim_cen} to the case where $X$ is not necessarily invertible or even a square matrix.

\begin{theorem}
Let $n \in \N$ and let $M$ be an $n \times n$ matrix.
Assume $M$ is written as a block matrix of the form
    \[
M= \left( \begin{array}{c|c}
A & B \\
\hline 
C & D 
\end{array}
\right),
\]
where $A$ is an $s \times s$ matrix, $B$ is an $s \times (n-s)$ matrix, $C$ is an $(n-s) \times s$ matrix and $D$ is an $(n-s) \times (n-s)$ matrix, where $1 \leq s <n$. If there exists an $(n-s)  \times s$ matrix $X$, of rank $r>0$ such that $XA=DX$ and $C=XBX$, then $M$ is similar to a matrix containing a centrosymmetric principal submatrix of size $2r$. More precisely, $M$ is similar to a matrix of the form
\[
\left(\begin{array}{c|c}
U & V \\
\hline
W & Z
\end{array}\right)
\]
where $U$ is a centrosymmetric $(2r)\times (2r)$ matrix, $V$ is a $(2r) \times (n-2r)$ matrix, $W$ is a $(n-2r) \times (2r)$ matrix and $Z$ is a $(n-2r) \times (n-2r)$ matrix.
\end{theorem}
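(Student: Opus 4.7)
The plan is to reduce to the case where $X$ takes the canonical rank-$r$ form and then mimic the argument of Theorem~\ref{th:sim_cen} on a suitable sub-block. Since $X$ is $(n-s)\times s$ of rank $r$, there exist invertible matrices $P$ (size $(n-s)\times(n-s)$) and $Q$ (size $s\times s$) such that $X=P E Q$, where $E$ is the $(n-s)\times s$ matrix with $I_r$ in the top-left $r\times r$ corner and zeros elsewhere. Applying to $M$ the block-diagonal similarity $S=Q^{-1}\oplus P$, a direct calculation shows that the transformed matrix $\tilde M$ has the same block form with entries $\tilde A=QAQ^{-1}$, $\tilde B=QBP$, $\tilde C=P^{-1}CQ^{-1}$, $\tilde D=P^{-1}DP$, and that the hypotheses $XA=DX$ and $C=XBX$ translate into $E\tilde A=\tilde D E$ and $\tilde C=E\tilde B E$. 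Hence I may assume from the outset that $X=E$.

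Next I would partition $\tilde A$, $\tilde B$, $\tilde C$, $\tilde D$ along the distinguished rank-$r$ split (so $\tilde A=\bigl(\begin{smallmatrix}A_{11}&A_{12}\\ A_{21}&A_{22}\end{smallmatrix}\bigr)$ with $A_{11}\in\K^{r\times r}$, and analogously for the others, accounting for the different side lengths $s-r$ and $n-s-r$). Unpacking the relations $E\tilde A=\tilde D E$ and $\tilde C=E\tilde B E$ gives exactly the equalities $A_{11}=D_{11}$, $C_{11}=B_{11}$, $A_{12}=0$, $D_{21}=0$, and $C_{12}=C_{21}=C_{22}=0$. In particular, the principal submatrix of $\tilde M$ indexed by the rows and columns $\{1,\ldots,r\}\cup\{s+1,\ldots,s+r\}$ is
\[
N=\begin{pmatrix} A_{11} & B_{11} \\ B_{11} & A_{11} \end{pmatrix},
\]
which already satisfies the hypotheses of Theorem~\ref{th:sim_cen} with $X=I_r$.

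To make $N$ itself centrosymmetric (not merely similar to one) inside the ambient matrix, I would apply the second similarity $T=I_r\oplus I_{s-r}\oplus J_r\oplus I_{n-s-r}$, which satisfies $T^{-1}=T$. This reverses only the third block of rows and columns, so it replaces $N$ by
\[
\begin{pmatrix} A_{11} & B_{11}J_r \\ J_rB_{11} & J_rA_{11}J_r \end{pmatrix},
\]
and the identities $J_r A_{11}=(J_rA_{11}J_r)J_r$ and $J_rB_{11}=J_r(B_{11}J_r)J_r$ are both tautological, so by Lemma~\ref{le:char_centrosym} this new block is a centrosymmetric $(2r)\times(2r)$ matrix. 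Finally, a permutation similarity that brings the index set $\{1,\ldots,r,\,s+1,\ldots,s+r\}$ to the first $2r$ positions produces the block form in the statement, with $U$ centrosymmetric of size $2r$ and $V$, $W$, $Z$ of the claimed sizes.

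The main difficulty I foresee is purely bookkeeping: keeping the four different block sizes $r$, $s-r$, $r$, $n-s-r$ straight through three successive similarities (the rank reduction by $S$, the $J_r$-conjugation $T$, and the final reindexing permutation), and verifying carefully that the equations $E\tilde A=\tilde D E$ and $\tilde C=E\tilde B E$ really force the seven scalar equalities listed above. Once that bookkeeping is in place, the centrosymmetric conclusion follows exactly as in the proof of Theorem~\ref{th:sim_cen}, with $I_r$ playing the role that the invertible $X$ played there.
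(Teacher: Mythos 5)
Your proposal is correct and follows essentially the same route as the paper: reduce $X$ to the canonical rank form $\left(\begin{smallmatrix} I_r & 0 \\ 0 & 0\end{smallmatrix}\right)$ by a block-diagonal similarity, read off $A_{11}=D_{11}$ and $C_{11}=B_{11}$ from the transformed equations, and then conjugate by a $J_r$-twisted permutation to exhibit the centrosymmetric $2r\times 2r$ principal block. The only cosmetic difference is that the paper merges your two final similarities (the $J_r$-conjugation and the reindexing permutation) into a single block matrix.
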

\begin{proof}
If $r=s=n-s$, then this theorem is just Theorem~\ref{th:sim_cen}.

Let us assume that $r<\min\{s,n-s\}$.  Since $X$ is of rank $r$ there exists an invertible $(n-s) \times (n-s)$ matrix $T$ and an invertible $s \times s$ matrix $S$ such that
\[
X':=T X S = \left( \begin{array}{c|c} 
I_{r} & 0 \\
\hline
0 & 0
\end{array} \right).
\]
Observe that
\[
\left(\begin{array}{c|c}
S^{-1} & 0 \\
\hline
0 & T
\end{array} \right)
\left(\begin{array}{c|c}
A & B \\
\hline
C & D
\end{array} \right)
\left(\begin{array}{c|c} S & 0 \\
\hline
0 & T^{-1}
\end{array} \right)
=
\left(\begin{array}{c|c}
S^{-1} A S & S^{-1} B T^{-1} \\
\hline
T C S & T D T^{-1}
\end{array} \right) 
=:
\left(\begin{array}{c|c}
A' & B' \\
\hline
C' & D'
\end{array} \right)=:M',
\]
where $A'$ is an $s \times s$ matrix, $B'$ is an $s \times (n-s)$ matrix, $C'$ is an $(n-s) \times s$ matrix and $D'$ is an $(n-s) \times (n-s)$ matrix. Hence $M$ is similar to $M'$.

We now write $A'$, $B'$, $C'$ and $D'$ as block matrices of the form
\[
A'= \left(\begin{array}{c|c}
A_{11}' & A_{12}' \\[2pt]
\hline
A_{21}' & A_{22}'
\end{array} \right), 
\quad
B'= \left(\begin{array}{c|c}
B_{11}' & B_{12}' \\[2pt]
\hline
B_{21}' & B_{22}'
\end{array} \right),
\quad
C'= \left(\begin{array}{c|c}
C_{11}' & C_{12}' \\[2pt]
\hline
C_{21}' & C_{22}'
\end{array} \right) 
\quad \text{ and } \quad
D'= \left(\begin{array}{c|c}
D_{11}' & D_{12}' \\[2pt]
\hline
D_{21}' & D_{22}'
\end{array} \right), 
\]
where $A_{11}'$, $B_{11}'$, $C_{11}'$ and $D_{11}'$ are $r \times r$ matrices (the sizes of the rest of the blocks can be easily determined but we will not write them since they will not be needed).

A computation shows that $XA=DX$ implies $X'A'=D'X'$ and that $C=XDX$ implies $C'=X'B'X'$. But, observe that
\[
\left( \begin{array}{c|c} 
A_{11}' & A_{12}' \\[2pt]
\hline
0 & 0
\end{array} \right)=
\left( \begin{array}{c|c} 
I_{r} & 0 \\
\hline
0 & 0
\end{array} \right)
\left(\begin{array}{c|c}
A_{11}' & A_{12}' \\[2pt]
\hline
A_{21}' & A_{22}'
\end{array} \right)
=X'A'=D'X'=
\left(\begin{array}{c|c}
D_{11}' & D_{12}' \\[2pt]
\hline
D_{21}' & D_{22}'
\end{array} \right)
\left( \begin{array}{c|c} 
I_{r} & 0 \\
\hline
0 & 0
\end{array} \right)=
\left( \begin{array}{c|c} 
D_{11}' & 0 \\[2pt]
\hline
D_{21}' & 0
\end{array} \right),
\]
and
\[
\left( \begin{array}{c|c} 
C_{11}' & C_{12}' \\[2pt]
\hline
C_{21}' & C_{22}'
\end{array} \right)=
C'= X' B' X'=
\left( \begin{array}{c|c} 
I_{r} & 0 \\
\hline
0 & 0
\end{array} \right)
\left(\begin{array}{c|c}
B_{11}' & B_{12}' \\[2pt]
\hline
B_{21}' & B_{22}'
\end{array} \right)
\left( \begin{array}{c|c} 
I_{r} & 0 \\
\hline
0 & 0
\end{array} \right)=
\left( \begin{array}{c|c} 
B_{11}' & 0 \\[2pt]
\hline
0 & 0
\end{array} \right),
\]
which implies that $A_{11}'=D_{11}'$ and $C_{11}'=B_{11}'$.

We now show that the matrix $M'$ is similar to a matrix of the form
\[
\left(\begin{array}{c|c}
U & V \\
\hline
W & Z
\end{array}\right)
\]
where $U$ is a centrosymmetric $(2r)\times (2r)$ matrix. Indeed, observe that
\[
\left( \begin{array}{c|c|c|c} 
I_r & 0 & 0 & 0  \\[2pt]
\hline
0 & 0 & J_{r} & 0 \\[2pt]
 \hline
0 & J_{s-r} & 0 & 0 \\[2pt]
\hline
0 & 0 & 0 & I_{n-s-r} \\[2pt]
\end{array} \right)
\left( \begin{array}{c|c|c|c} 
A_{11}' & A_{12}' & B_{11}' & B_{12}' \\[2pt]
\hline
A_{21}' & A_{22}' & B_{21}' & B_{22}' \\[2pt]
\hline
C_{11}' & C_{12}' & D_{11}' & D_{12}' \\[2pt]
\hline
C_{21}' & C_{22}' & D_{21}' & D_{22}' \\[2pt]
\end{array} \right)
\left( \begin{array}{c|c|c|c} 
I_r & 0 & 0 & 0  \\[2pt]
\hline
0 & 0 & J_{s-r} & 0 \\[2pt]
 \hline
0 & J_{r} & 0 & 0 \\[2pt]
\hline
0 & 0 & 0 & I_{n-s-r} \\[2pt]
\end{array} \right)
=
\left( \begin{array}{c|c|c|c} 
A_{11}' & B_{11}' J_{r} & A_{12}' J_{s-r} & B_{12}' \\[2pt]
\hline
J_{r} C_{11}' & J_{r} D_{11}' J_{r} & J_{r} C_{12}' J_{s-r} & J_{r} D_{12}' \\[2pt]
\hline
J_{s-r} A_{21}' & J_{s-r} B_{21}' J_{r} & J_{s-r} A_{22}' J_{s-r} & J_{s-r} B_{22}' \\[2pt]
\hline
C_{21}' &  D_{21}' J_{r} & C_{22}' J_{s-r} & D_{22}' \\[2pt]
\end{array} \right),
\]
and that the upper-left $2\times 2$ block in the above matrix, namely,
\[
U:=\left( \begin{array}{c|c} 
A_{11}' & B_{11}' J_{r}  \\[2pt]
\hline
J_r C_{11}' & J_r D_{11}' J_{r} 
\end{array}\right),
\]
is a centrosymmetric matrix, since $A_{11}'=D_{11}'$ and $C_{11}'=B_{11}'$. 

The proofs for each of the cases where $r=s <n-s$ and $r=n-s<s$ are similar and we omit them.
\end{proof}

In the following theorem we observe that if $X$ is a not necessarily square matrix, but is of full rank, then $M$ can be dilated to a matrix which is similar to a centrosymmetric matrix.

\begin{theorem}
Let $n \in \N$ and let $M$ be an $n \times n$ matrix.
Assume $M$ is written as a block matrix of the form
    \[
M= \left( \begin{array}{c|c}
A & B \\
\hline 
C & D 
\end{array}
\right),
\]
where $A$ is an $s \times s$ matrix, $B$ is an $s \times (n-s)$ matrix, $C$ is an $(n-s) \times s$ matrix and $D$ is an $(n-s) \times (n-s)$ matrix, where $1 \leq s <n$. If there exists an $(n-s)  \times s$ matrix $X$, of rank $\min\{s,n-s\}$ such that $XA=DX$ and $C=XBX$, then the matrix $M$ is a principal submatrix of a matrix $\widehat{M}$, which is similar to a centrosymmetric matrix. More precisely, if $k=\max\{s,n-s\}$,
there exist an $n\times(2k-n)$ matrix $B'$, a $(2k-n)\times n$ matrix $C'$ and a $(2k-n) \times (2k-n)$ matrix $D'$ such that the $(2k)\times (2k)$ matrix
\[
\widehat{M}= \left( \begin{array}{c|c}
M & B' \\
\hline 
C' & D'
\end{array}
\right)
\]
is similar to a centrosymmetric matrix.
\end{theorem}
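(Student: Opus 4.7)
The plan is to reduce $X$ to standard form via the technique of the previous theorem, write $M$ (after the reduction similarity) in a refined block form, and then dilate by duplicating certain blocks so that a suitable permutation reveals the hypotheses of Theorem~\ref{th:sim_cen} with $X=I_k$.

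Assume without loss of generality that $s \leq n-s$ (the case $s > n-s$ is analogous, and in fact simpler, since the standard reduction $X' = (I_{n-s}\mid 0)$ produces a block structure already aligned with the required partition); set $k = n-s$ and $m = k-s \geq 0$, so $X$ is $k \times s$ of rank $s$. As in the proof of the previous theorem, choose invertible $S$ (of size $s \times s$) and $T$ (of size $k \times k$) such that
\[
TXS = \begin{pmatrix} I_s \\ 0 \end{pmatrix},
\]
and let $Q_0 = \begin{pmatrix} S & 0 \\ 0 & T^{-1} \end{pmatrix}$. Setting $M' = Q_0^{-1}MQ_0$ with blocks $A', B', C', D'$, the equations $X'A' = D'X'$ and $C' = X'B'X'$ force, after writing $B' = (B'_1 \mid B'_2)$ with $B'_1$ of size $s\times s$ and partitioning $D'$ with the $(s,m)$ block structure,
\[
M' = \begin{pmatrix}
A' & B'_1 & B'_2 \\
B'_1 & A' & D'_{12} \\
0 & 0 & D'_{22}
\end{pmatrix}
\]
in the refined row/column block pattern of sizes $s, s, m$.

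Next, define the $(2k)\times(2k)$ dilation of $M'$ by appending $m$ rows and columns whose entries duplicate appropriate blocks of $M'$:
\[
\widehat{M}' = \begin{pmatrix}
A' & B'_1 & B'_2 & D'_{12} \\
B'_1 & A' & D'_{12} & B'_2 \\
0 & 0 & D'_{22} & 0 \\
0 & 0 & 0 & D'_{22}
\end{pmatrix}.
\]
Let $P$ be the permutation matrix that swaps the middle two row/column groups (changing the pattern $s, s, m, m$ into $s, m, s, m$), and set $\widetilde{M}' = P\,\widehat{M}'\,P^{T}$. A direct computation reveals that, when $\widetilde{M}'$ is partitioned into four $k \times k$ blocks,
\[
\widetilde{A}' = \widetilde{D}' = \begin{pmatrix} A' & B'_2 \\ 0 & D'_{22} \end{pmatrix}
\quad\text{and}\quad
\widetilde{B}' = \widetilde{C}' = \begin{pmatrix} B'_1 & D'_{12} \\ 0 & 0 \end{pmatrix},
\]
so Theorem~\ref{th:sim_cen} applied with $X = I_k$ shows that $\widetilde{M}'$, and hence $\widehat{M}'$, is similar to a centrosymmetric matrix.

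Finally, set $\widehat{Q} = \begin{pmatrix} Q_0 & 0 \\ 0 & I_m \end{pmatrix}$ and define $\widehat{M} := \widehat{Q}\,\widehat{M}'\,\widehat{Q}^{-1}$. Since $\widehat{Q}$ is block diagonal and agrees with $Q_0$ on the first $n$ coordinates, the upper-left $n \times n$ principal submatrix of $\widehat{M}$ equals $Q_0 M' Q_0^{-1} = M$, while $\widehat{M}$ is similar to $\widehat{M}'$ and therefore to a centrosymmetric matrix. The main obstacle is bookkeeping: one must verify that the chosen permutation truly aligns the $(s,s,m,m)$ partition of $\widehat{M}'$ with the coarser $(k,k)$ partition required by Theorem~\ref{th:sim_cen}, and that the duplicated entries have been chosen precisely so that the resulting upper-left and lower-right $k \times k$ blocks coincide (and similarly for the off-diagonal blocks); once these computations are carried out, the conclusion follows by composing the reduction, permutation, and centrosymmetry similarities.
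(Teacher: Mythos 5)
Your proof is correct, but it takes a genuinely different route from the paper's. The paper never normalizes $X$: in the case $s>n-s$ it extends $X$ itself to an invertible $s\times s$ matrix $\widehat{X}=\left(\begin{smallmatrix}X\\ Y\end{smallmatrix}\right)$, then manufactures the new blocks directly (reading $D_{21},D_{22}$ off $YA\widehat{X}^{-1}$, picking $B_2$ with $XB_2=0$, and setting $C_2=YBX+YB_2Y$) so that $\widehat{X}A=\widehat{D}\widehat{X}$ and $\widehat{C}=\widehat{X}\widehat{B}\widehat{X}$ hold, after which Theorem~\ref{th:sim_cen} applies with the invertible $\widehat{X}$; the case $s<n-s$ is handled symmetrically by appending columns. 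You instead first conjugate so that $X$ becomes $\left(\begin{smallmatrix}I_s\\ 0\end{smallmatrix}\right)$ — which correctly forces $D'_{11}=A'$, $D'_{21}=0$, $C'=\left(\begin{smallmatrix}B'_1\\ 0\end{smallmatrix}\right)$, exactly as in the rank-$r$ theorem — then dilate by duplicating the blocks $B'_2$, $D'_{12}$, $D'_{22}$ and permute the $(s,s,m,m)$ pattern to $(s,m,s,m)$; I have checked that the resulting $k\times k$ blocks are $\widetilde{A}'=\widetilde{D}'$ and $\widetilde{B}'=\widetilde{C}'$ as you claim, so Theorem~\ref{th:sim_cen} applies with $X=I_k$, and the final conjugation by $\widehat{Q}$ correctly restores $M$ as the leading principal submatrix. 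What the paper's approach buys is directness (no normalization, no permutation bookkeeping, $M$ sits inside $\widehat{M}$ from the start); what yours buys is that the hypotheses of Theorem~\ref{th:sim_cen} become trivial to verify once the normal form is in place, and the dilation is completely explicit. Your deferral of the $s>n-s$ case to "analogous" is acceptable but note it is not literally symmetric: there the normal form is $X'=\left(\begin{array}{c|c}I_{n-s}&0\end{array}\right)$, the forced zero structure sits in $A'$ and $C'$ rather than $D'$ and $C'$, and the duplicated blocks must be appended on the other side, just as the paper splits into two cases.
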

\begin{proof}
If $s=\frac{n}{2}$, then this theorem is just Theorem~\ref{th:sim_cen}.

We first assume that $\frac{n}{2}<s$. In this case $k=s$. Since $X$ is an $(n-s)\times s$ matrix of rank $n-s$, there exists an $(2s-n)\times s$ matrix $Y$ such that
\[
\widehat{X}=\left(\begin{array}{c}
X\\
\hline
Y
\end{array}
\right)
\]
is an invertible $s \times s$ matrix. (Observe that $Y$ must be of rank $2s-n$.)

Let us rewrite the $(2s-n) \times s$ matrix $Y A \widehat{X}^{-1}$ as
\[
\left( \begin{array}{c|c} D_{21} & D_{22}\end{array} \right),
\]
where $D_{21}$ is a $(2s-n)\times (n-s)$ matrix and $D_{22}$ is a $(2s-n)\times (2s-n)$ matrix.

Now, if we define the $s \times s$ matrix
\[
\widehat{D}=\left(\begin{array}{c|c}
D & 0 \\
\hline
D_{21} & D_{22}
\end{array}\right),
\]
then observe that
\[
\widehat{X}A=\left(\begin{array}{c}
X\\
\hline
Y
\end{array}
\right) A = \left(\begin{array}{c}
X A\\
\hline
Y A
\end{array}
\right)
=
\left(\begin{array}{c}
D X\\
\hline
D_{21} X + D_{22}Y
\end{array}
\right) 
= \left(\begin{array}{c|c}
D & 0 \\
\hline
D_{21} & D_{22}
\end{array}\right) 
\left(\begin{array}{c}
X\\
\hline
Y
\end{array} \right) 
= \widehat{D} \widehat{X},
\]
where the third equality follows from the hypothesis $XA=DX$ and from $YA=\left(\begin{array}{c|c} D_{21} & D_{22} \end{array} \right) \widehat{X}$.

Now, we choose an $s \times (2s-n)$ matrix $B_2$ such that $X B_2=0$ and we define a $(2s-n) \times s$ matrix $C_2$ as $C_2=Y B X+Y B_2 Y$. Then, if we define the $s \times s$ matrices $\widehat{B}$ and $\widehat{C}$ as
\[
\widehat{B} = \left(\begin{array}{c|c} B & B_2 \end{array} \right) 
\quad \text{ and } \quad
\widehat{C} = \left(\begin{array}{c} C \\
\hline C_2 \end{array} \right),
\]
we can see that
\[
\widehat{X} \ \widehat{B} \ \widehat{X} 
= \left(\begin{array}{c} X\\ \hline Y \end{array} \right) 
\left(\begin{array}{c|c} B & B_2 \end{array} \right) 
\left(\begin{array}{c} X\\ \hline Y \end{array} \right)
=\left( \begin{array}{c}
X B X + X B_2 Y \\
\hline
Y B X + Y B_2 Y
\end{array} \right)
=\left( \begin{array}{c}
C\\
\hline
C_2
\end{array} \right)
=\widehat{C},
\]
where the third equality follows from the hypothesis $XBX=C$ and the choice of matrices $B_2$ and $C_2$. 

Therefore, by Theorem~\ref{th:sim_cen}, the matrix
\[
\widehat{M}=
\left(
\begin{array}{c|c}
    A & \widehat{B} \\[2pt]
    \hline \\[-10pt]
    \widehat{C} & \widehat{D} 
\end{array} \right) =
\left(
\begin{array}{c|c|c}
    A & B & B_2 \\
    \hline 
    C & D & 0 \\
    \hline
    C_2 & D_{21} & D_{22}
\end{array}
\right) 
= \left(
\begin{array}{c|c}
    M & B' \\
    \hline 
    C' & D'
\end{array} \right) 
\]
is similar to a centrosymmetric matrix, which finishes the case $\frac{n}{2} < s$.

Now we assume that $1 \leq s < \frac{n}{2}$. In this case $k=n-s$. Since $X$ is of rank $s$ there exists an $(n-s) \times (n-2s)$ matrix $Y$ such that
\[
\widehat{X}=\left(\begin{array}{c|c}
Y & X
\end{array} \right)
\]
is an invertible $(n-s) \times (n-s)$ matrix. (Observe that $Y$ must be of rank $n-2s$.) Now, proceeding in a similar fashion as above, we can choose an $(n-2s) \times (n-2s)$ matrix $A_{11}$, an $s \times(n-2s)$ matrix $A_{21}$ such that
\[
\left(\begin{array}{c}
A_{11} \\
\hline 
A_{21}
\end{array} \right) = \widehat{X}^{-1} D Y,
\]
 an $(n-2s) \times (n-s)$ matrix $B_1$ such that $B_1 X=0$, and we can set the $(n-s)\times (n-2s)$ matrix  $C_1$ as $C_1=Y B_1 Y + X B Y$. Then the $(n-s) \times (n-s)$ matrices
\[
\widehat{A}=\left(\begin{array}{c|c}
A_{11} & 0 \\
\hline
A_{21} & A
\end{array}\right), 
\qquad 
\widehat{B}=\left(\begin{array}{c}
B_1 \\
\hline
B
\end{array}\right), 
\quad \text{ and } \quad
\widehat{C}=\left(\begin{array}{c|c}
C_1 & C
\end{array}\right)
\]
satisfy the equations
\[
\widehat{X} \widehat{A} = D \widehat{X}, \quad \text{ and } \quad \widehat{C}= \widehat{X} \ \widehat{B} \ \widehat{X}.
\]
Therefore, by Theorem~\ref{th:sim_cen}, the $2(n-s) \times 2(n-s)$ matrix
\[
\widehat{M}=
\left(
\begin{array}{c|c}
    \widehat{A} & \widehat{B} \\[2pt]
    \hline \\[-10pt]
    \widehat{C} & D 
\end{array} \right) =
\left(
\begin{array}{c|c|c}
    A_{11} & 0 & B_1 \\
    \hline 
    A_{21} & A & B \\
    \hline
    C_1 & C & D
\end{array}
\right) 
= \left(
\begin{array}{c|c}
    D' & C' \\
    \hline 
    B' & M
\end{array} \right) 
\]
is similar to a centrosymmetric matrix. Finally, since
\[
\left(\begin{array}{c|c}
    M & B' \\
    \hline 
    C' & D'
\end{array} \right) 
=
\left(\begin{array}{c|c}
    0 & I_n \\
    \hline 
    I_{n-2s} & 0
\end{array} \right) 
\left(\begin{array}{c|c}
    D' & C' \\
    \hline 
    B' & M
\end{array} \right) 
\left(\begin{array}{c|c}
    0 & I_{n-2s} \\
    \hline 
    I_n & 0
\end{array} \right),
\]
the result follows.
\end{proof}

{\bf Remark}: If $\K$ is the field of complex numbers, observe that in the proof above, if the rows of the matrix $X$ are orthonormal, then the matrix $\widehat{X}$ can be chosen to be unitary. In this case, $\widehat{M}$ will be unitarily equivalent to a centrosymmetric matrix (by the remark following Theorem~\ref{th:sim_cen}).


\section{Factorization of determinants}\label{se:fact}

One of the interesting facts about centrosymmetric matrices is that the determinant of a centrosymmetric matrix can be factored as the product of the determinants of two particular matrices, as described in the following well-known theorem (a proof can be found in, for example \cite[p. 125]{aitken}).

\begin{theorem}\label{th:det_censym}
Let $M$ be an $n\times n$ centrosymmetric matrix.
\begin{itemize}
    \item Assume $n$ is even and $M$ is written as
    \[
M= \left( \begin{array}{c|c}
A & B \\
\hline 
C & D 
\end{array}
\right),
\]
where $A$, $B$, $C$ and $D$ are $\frac{n}{2} \times \frac{n}{2}$ matrices. Then $\det(M)=\det(A+BJ) \det(A-B J)$.
\item  Assume $n$ is odd and $M$ is written as
\[
M= \left( \begin{array}{c|c|c}
A & x & B \\
\hline
y & \mu & z \\
\hline
C & w & D 
\end{array}
\right)
\]
where $A,B,C,D$ are all $\frac{n-1}{2} \times \frac{n-1}{2}$ matrices, where $x, w$ are $\frac{n-1}{2} \times 1$ matrices, where $y, z$ are $1 \times \frac{n-1}{2}$ matrices, and $\mu \in \K$. Then,
\[
\det(M)=
\det \left( \begin{array}{c|c}
A + B J & x \\
\hline
2 y & \mu 
\end{array}
\right) \det(A -B J).
\]
\end{itemize}
\end{theorem}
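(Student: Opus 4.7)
The plan is to prove both parts by exhibiting an explicit similarity $M \mapsto P^{-1}MP$ that block-diagonalizes $M$: in the even case the two diagonal blocks will be $A - BJ$ and $A + BJ$, while in the odd case one block will be $A - BJ$ and the other will carry the data $A + BJ$, $x$, $2y$ and $\mu$. The factorization of $\det(M)$ is then immediate.

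For the even case, Lemma~\ref{le:char_centrosym} lets me replace $D$ by $JAJ$ and $C$ by $JBJ$. I would then conjugate by
\[
P = \begin{pmatrix} I & I \\ -J & J \end{pmatrix}, \qquad P^{-1} = \tfrac{1}{2}\begin{pmatrix} I & -J \\ I & J \end{pmatrix},
\]
and verify by direct block multiplication that, thanks to $J^2 = I$ together with the identities $C - DJ = -J(A - BJ)$ and $C + DJ = J(A + BJ)$, one obtains
\[
P^{-1} M P = \begin{pmatrix} A - BJ & 0 \\ 0 & A + BJ \end{pmatrix}.
\]
Taking determinants gives the claim.

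For the odd case, using additionally $w = Jx$ and $y = zJ$, I would conjugate by
\[
P = \begin{pmatrix} I & 0 & I \\ 0 & 1 & 0 \\ -J & 0 & J \end{pmatrix}, \qquad P^{-1} = \tfrac{1}{2}\begin{pmatrix} I & 0 & -J \\ 0 & 2 & 0 \\ I & 0 & J \end{pmatrix}.
\]
The analogous $3 \times 3$ block computation reduces $M$ to
\[
P^{-1} M P = \begin{pmatrix} A - BJ & 0 & 0 \\ 0 & \mu & 2zJ \\ 0 & x & A + BJ \end{pmatrix},
\]
which is block-diagonal with blocks $A - BJ$ and $\begin{pmatrix} \mu & 2zJ \\ x & A + BJ \end{pmatrix}$. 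Moving the singleton top row and singleton left column of this second block to the opposite corner requires $(n-1)/2$ row swaps and $(n-1)/2$ column swaps; since $n$ is odd, this contributes a net sign of $(-1)^{n-1} = +1$, so the second block has the same determinant as $\begin{pmatrix} A + BJ & x \\ 2y & \mu \end{pmatrix}$ (after substituting $y = zJ$), and the claimed factorization follows.

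The main obstacle is the $3 \times 3$ block verification of $P^{-1}MP$ in the odd case: all six off-diagonal blocks in the claimed form must vanish, and these cancellations rely on applying the centrosymmetry identities $D = JAJ$, $C = JBJ$, $w = Jx$, $y = zJ$ and $J^2 = I$ in several combinations. Keeping the bookkeeping straight across the nine resulting blocks is the only real hazard. A minor caveat is that $P^{-1}$ involves a factor of $\tfrac{1}{2}$; in characteristic $2$ one can either substitute elementary row and column operations that avoid this division, or appeal to the identity being polynomial in the entries of $A$, $B$, $x$, $z$ and $\mu$, so that its validity over $\mathbb{Q}$ extends to every field.
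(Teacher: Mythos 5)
Your proposal is correct; I verified both block computations. Note that the paper does not actually prove this theorem --- it states it as well known and refers to Aitken, p.~125 --- so there is no internal proof to compare against; your argument supplies one, and it is the standard one. In the even case the conjugation by $P=\left(\begin{smallmatrix} I & I \\ -J & J\end{smallmatrix}\right)$ does yield $\operatorname{diag}(A-BJ,\,A+BJ)$ using $C-DJ=-J(A-BJ)$ and $C+DJ=J(A+BJ)$, and in the odd case the analogous computation gives exactly the block $\left(\begin{smallmatrix}\mu & 2zJ\\ x & A+BJ\end{smallmatrix}\right)$ you claim; your sign count $(-1)^{(n-1)/2}\cdot(-1)^{(n-1)/2}=+1$ for the cyclic row and column shifts is right, and $2zJ=2y$ recovers the paper's stated form. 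The characteristic-$2$ caveat you raise is genuine --- $\det P=\det(2J)\cdot(\pm 1)$ vanishes there, so $P$ is truly singular, not merely inconvenient --- and both of your remedies work. The cleaner of the two is the unimodular row/column version: left-multiply by $\left(\begin{smallmatrix} I & J\\ 0 & I\end{smallmatrix}\right)$ and right-multiply by $\left(\begin{smallmatrix} I & -J\\ 0 & I\end{smallmatrix}\right)$ (adding the reversed bottom rows to the top rows, then subtracting $J$ times the first column block from the second), which produces a block-triangular matrix with blocks $A+BJ$ and $J(A-BJ)J$ without ever dividing by $2$; the polynomial-identity transfer from $\mathbb{Q}$ to an arbitrary field is also valid since both sides are integer polynomials in the free entries. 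Either way the proof is complete.
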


In view of the results in the previous section, it is natural to ask if a block matrix 
\[
M= \left( \begin{array}{c|c}
A & B \\
\hline 
C & D 
\end{array}
\right),
\]
satisfying the equations $XA=DX$ and $C=XBX$ for some matrix $X$, has a corresponding factorization. In fact, the following theorem, which establishes a factorization of the determinant under more general conditions, is well-known.

\begin{theorem}\label{th:main}
Let $n \in \N$ and let $M$ be an $n \times n$ matrix.
Assume $M$ is written as a block matrix of the form
    \[
M= \left( \begin{array}{c|c}
A & B \\
\hline 
C & D 
\end{array}
\right),
\]
where $A$ is an $s \times s$ matrix, $B$ is an $s \times (n-s)$ matrix, $C$ is an $(n-s) \times s$ matrix and $D$ is an $(n-s) \times (n-s)$ matrix. If there exists an $(n-s)  \times s$ matrix $X$ such that $C=XA-DX+XBX$, then
\[
\det(M)=\det(A+BX) \det(D-X B).
\]
\end{theorem}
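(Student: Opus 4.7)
The plan is to find a unit block-triangular conjugation (or a one-sided pair of unit block-triangular factors) that turns $M$ into a block upper-triangular matrix, after which the determinant splits as a product of the two diagonal blocks. Since the triangular factors have determinant $1$, this will give the claimed factorization immediately.

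Concretely, I would multiply $M$ on the right by $\begin{pmatrix} I_s & 0 \\ X & I_{n-s} \end{pmatrix}$ and on the left by $\begin{pmatrix} I_s & 0 \\ -X & I_{n-s} \end{pmatrix}$, each of which is a unit block-lower-triangular matrix of determinant $1$. Computing the right multiplication first gives
\[
\left(\begin{array}{c|c} A & B \\ \hline C & D \end{array}\right)\left(\begin{array}{c|c} I_s & 0 \\ \hline X & I_{n-s} \end{array}\right)
= \left(\begin{array}{c|c} A+BX & B \\ \hline C+DX & D \end{array}\right),
\]
and then the left multiplication produces
\[
\left(\begin{array}{c|c} A+BX & B \\ \hline C+DX-X(A+BX) & D-XB \end{array}\right).
\]
The lower-left block simplifies to $C-XA+DX-XBX$, which vanishes precisely by the hypothesis $C=XA-DX+XBX$. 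Therefore the conjugated matrix is block upper-triangular with diagonal blocks $A+BX$ and $D-XB$, whose determinants multiply to $\det(A+BX)\det(D-XB)$, and the triangular factors contribute no determinant.

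There is essentially no obstacle here; the only thing to verify is that the dimensions are compatible so that the identity above really is what it appears to be. Since $X$ is $(n-s)\times s$, the product $BX$ is $s\times s$, the product $XB$ is $(n-s)\times(n-s)$, and $XA$, $DX$, $XBX$ are all $(n-s)\times s$, matching $C$; so every term is well-defined. It is worth noting explicitly at the end of the proof that the hypothesis $C=XA-DX+XBX$ is an algebraic matrix Riccati equation, and that the special cases $XA=DX$ and $C=XBX$ (which together imply the Riccati identity) recover the factorization alluded to in the preceding section.
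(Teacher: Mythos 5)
Your proposal is correct and is essentially identical to the paper's own proof: the paper conjugates $M$ by the same unit block-lower-triangular matrix $\left(\begin{smallmatrix} I_s & 0 \\ X & I_{n-s}\end{smallmatrix}\right)$, observes that the lower-left block of the result is $C - XA + DX - XBX = 0$ by hypothesis, and takes determinants. No issues.
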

\begin{proof}
The result will follow by performing the multiplication:
\[
\left( \begin{array}{c|c}
I_s & 0 \\
\hline
-X & I_{n-s} 
\end{array}
\right)
\left( \begin{array}{c|c}
A & B \\
\hline
C & D 
\end{array}
\right)
 \left( \begin{array}{c|c}
I_s & 0 \\
\hline
X & I_{n-s} 
\end{array}
\right)
=
 \left( \begin{array}{c|c}
A + B X & B \\
\hline
C-XA+DX -XBX & D- X B  
\end{array}
\right)
=
 \left( \begin{array}{c|c}
A + B X & B \\
\hline
0 & D-X B 
\end{array}
\right),
\]
where the last equality follows by the hypothesis. Taking determinants on both sides, the result follows.
\end{proof}

This theorem can also be found, for example, in \cite{Meyer} or \cite{NoTa}.

In view of Theorem~\ref{th:main}, and recalling Theorem~\ref{th:sim_cen}, it is natural to ask if the existence of an invertible matrix $X$ such that $C=XA-DX+XBX$ implies that $M$ is similar to a centrosymmetric matrix. However, this is false,  as the next example shows. 

\begin{example}Assume $\K$ is a field of characteristic not equal to $2$. The $2 \times 2$ matrix
\[
M=\begin{pmatrix}
1 & -1 \\ 
1 & -1
\end{pmatrix}
\]
satisfies the equations $C=XA-DX+XBX$ for $X=1$, but it is not similar to a centrosymmetric matrix. Indeed, $M^2=0$ and hence every matrix $Q$ similar to $M$ must satisfy the equation $Q^2=0$. But it is easy to see that the only $2 \times 2$ centrosymmetric matrix $Q$ which satisfies $Q^2=0$ is the zero matrix. But clearly $M$ is not similar to the zero matrix.
\end{example}

Observe that this example also shows that the equations $C=XA$ and $DX=XBX$, or the equations $C=-DX$ and $XA=-XBX$ do not imply that $M$ is similar to a centrosymmetric matrix.

Similarly to Theorem~\ref{th:main}, it is possible to show the following theorem, which is also well-known.

\begin{theorem}\label{th:main_prime}
Let $M$ be an $n \times n$ matrix.
Assume $M$ is written as a block matrix of the form
\[
M= \left( \begin{array}{c|c}
A & B \\
\hline 
C & D 
\end{array}
\right),
\]
where $A$ is an $s \times s$ matrix, $B$ is an $s \times (n-s)$ matrix, $C$ is an $(n-s) \times s$ matrix and $D$ is an $(n-s) \times (n-s)$ matrix. If there exists an $s\times (n-s)$ matrix $Y$ such that $B=YD-AY+YCY$, then
\[
\det(M)=\det(A-YC) \det(D+CY).
\]
\end{theorem}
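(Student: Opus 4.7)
The plan is to imitate the proof of Theorem~\ref{th:main} but with the roles of the upper and lower triangular block matrices swapped, so that the $B$ block is cleared instead of the $C$ block. Concretely, I would sandwich $M$ between the block matrices
\[
\left(\begin{array}{c|c} I_s & -Y \\ \hline 0 & I_{n-s}\end{array}\right)
\qquad \text{and} \qquad
\left(\begin{array}{c|c} I_s & Y \\ \hline 0 & I_{n-s}\end{array}\right),
\]
both of which have determinant $1$, so that the determinant of the product equals $\det(M)$.

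Carrying out the multiplication in two steps: left-multiplication replaces the top row by $(A-YC \mid B-YD)$ and leaves the bottom row as $(C\mid D)$; right-multiplication then adds $Y$ times the first block-column to the second block-column, producing
\[
\left(\begin{array}{c|c} A-YC & (A-YC)Y + B - YD \\ \hline C & CY + D \end{array}\right).
\]
The $(1,2)$ block simplifies to $AY - YCY + B - YD$. Using the hypothesis $B = YD - AY + YCY$ directly cancels this expression to $0$, so the sandwiched product is block upper-triangular:
\[
\left(\begin{array}{c|c} A-YC & 0 \\ \hline C & D+CY \end{array}\right).
\]
Taking determinants of both sides yields $\det(M) = \det(A-YC)\det(D+CY)$, which is the desired factorization.

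There is no real obstacle here; the only thing to be careful about is the order of operations in the $(1,2)$ entry, so that the cancellation uses the hypothesis in exactly the form stated. This mirrors Theorem~\ref{th:main} perfectly (transposing the roles of rows and columns), and in fact one could alternatively derive it by applying Theorem~\ref{th:main} to $M^{\T}$ with $X = Y^{\T}$, but the direct block-multiplication argument above is just as short and self-contained.
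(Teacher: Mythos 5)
Your proposal is correct and follows exactly the paper's own argument: conjugating $M$ by the unit upper-triangular block matrix $\left(\begin{smallmatrix} I_s & Y \\ 0 & I_{n-s}\end{smallmatrix}\right)$ to clear the $(1,2)$ block via the hypothesis, then taking determinants. The computation of the $(1,2)$ entry and its cancellation are exactly as in the paper, so there is nothing to add.
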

\begin{proof}
The proof is similar to the proof of Theorem~\ref{th:main}. Just observe that
\[
\left( \begin{array}{c|c}
I_s & -Y \\
\hline
0& I_{n-s} 
\end{array}
\right)
\left( \begin{array}{c|c}
A & B \\
\hline 
C & D 
\end{array}
\right)
\left( \begin{array}{c|c}
I_s & Y \\
\hline
0 & I_{n-s} 
\end{array}
\right)
= \left( \begin{array}{c|c}
A-YC & 0 \\
\hline 
C & D +CY
\end{array}
\right). \qedhere
\]
\end{proof}

Nevertheless, the following question arises. Assume that both equations
\[
C=XA-DX+XBX, \quad \text{ and }\quad B=YD-AY+YCY
\]
are satisfied: are the factorizations given by Theorems~\ref{th:main} and \ref{th:main_prime} the same?

In case $X$ is invertible (here we asumme $n$ is even and $s=\frac{n}{2}$) it immediately follows that $B=X^{-1}D - A X^{-1}+ X^{-1}C X^{-1}$ and hence $B=YD-AY+YCY$ is satisfied with $Y=X^{-1}$. In this case the factorizations are the same. Indeed, observe that if $C=XA-DX+XBX$, then Theorem~\ref{th:main} gives that
\[
\det(M)=\det(A+BX) \det(D-XB).
\]
Since $B=X^{-1}D - A X^{-1}+ X^{-1}C X^{-1}$, Theorem~\ref{th:main_prime} gives that
\[
\det(M)=\det(A-X^{-1}C) \det( D + C X^{-1}).
\]
But observe that since $BX=X^{-1}DX-A+X^{-1}C$, then $A-X^{-1}C=(X^{-1}D-B)X$, and hence $A-X^{-1}C=X^{-1}(D-XB)X$, from which $\det(A-X^{-1}C)=\det(D-XB)$. Analogously, since $XB=D-XAX^{-1}+C X^{-1}$, then $D+CX^{-1}=XB+XAX^{-1}=X(A+BX)X^{-1}$, and hence $\det(D+CX^{-1})=\det(A+BX)$.  Therefore, in this case, the factorizations given by Theorems~\ref{th:main} and \ref{th:main_prime} are the same.

On the other hand, it is possible that $C=XA-DX+XBX$ and $B=YD-AY+YCY$ are satisfied but $X$ and $Y$ are not inverses of each other. Indeed, observe that if
\[
M=\left(\begin{array}{c|c}
I & I \\
\hline
0 & 2 I 
\end{array}\right),
\]
then $C=XA-DX+XBX$ with $X=0$ and $B=YD-AY+YCY$ with $Y=I$ and hence $X$ and $Y$ are not inverses of each other. But in this case,  the factorizations given by Theorems~\ref{th:main} and \ref{th:main_prime} are the same.

In general, regardless of the parity of $n$, is it possible that both equations
\[
C=XA-DX+XBX, \quad \text{ and }\quad B=YD-AY+YCY
\]
are satisfied and the factorizations given by Theorems~\ref{th:main} and \ref{th:main_prime} are not the same? We leave this question open for future research.

One way to obtain examples for Theorem~\ref{th:main} is by solving a system of matrix equations like the ones in the first section of this paper: $XA=DX$ and $C=XBX$, instead of the equation  $C=XA - DX + XBX$. Alternatively, one may require the existence of a matrix $X$ such that $C=XA$ and $DX=XBX$. If this case occurs and $X \neq 0$, the determinant of $M$ turns out to be zero. The same happens with other pairs of equations, as the next proposition shows.

\begin{proposition}
Let $M$ be an $n \times n$ matrix.
Assume $M$ is written as a block matrix of the form
    \[
M= \left( \begin{array}{c|c}
A & B \\
\hline 
C & D 
\end{array}
\right),
\]
where $A$ is an $s \times s$ matrix, $B$ is an $s \times (n-s)$ matrix, $C$ is an $(n-s) \times s$ matrix and $D$ is an $(n-s) \times (n-s)$ matrix. Assume there exists an $(n-s) \times s$ matrix $X\neq 0$ or an $s \times (n-s)$ matrix $Y\neq 0$ such that one of the folowing four systems of equations hold:
\begin{enumerate}
    \item $C=XA$ and $DX=XBX$;
    \item $C=-DX$ and $XA=-XBX$;
    \item $B=YD$ and $AY=YCY$;
    \item $B=-AY$ and $YD=-YCY$.
\end{enumerate}
Then $\det(M)=0$.
\end{proposition}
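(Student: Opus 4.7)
The plan is to reduce each of the four cases to the factorizations already obtained in Theorems~\ref{th:main} and \ref{th:main_prime}, and then to argue that in each case the original hypothesis, together with $X\neq 0$ or $Y\neq 0$, forces one of the two square factors in the factorization to be singular. The overarching observation I will use repeatedly is that for any nonzero (possibly rectangular) matrix $X$, the kernel of $X$ is a proper subspace of its domain and the image of $X$ is a nonzero subspace of its codomain; so any equation of the form $(P)X=0$ with $P$ square immediately gives $\ker P\neq 0$, and any equation of the form $X(Q)=0$ with $Q$ square gives $\operatorname{Im} Q \subsetneq$ full space. In either situation $P$ or $Q$ is singular.

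For cases (1) and (2) I would first verify that the pair of hypotheses combines into the single identity $C=XA-DX+XBX$ needed to invoke Theorem~\ref{th:main}. In case (1), $C=XA$ and $DX=XBX$ give $-DX+XBX=0$, so $C=XA-DX+XBX$. In case (2), $C=-DX$ and $XA=-XBX$ give $XA+XBX=0$, so again $C=XA-DX+XBX$. In both cases Theorem~\ref{th:main} yields $\det(M)=\det(A+BX)\det(D-XB)$. Now in case (1) the equation $DX=XBX$ rewrites as $(D-XB)X=0$, so every column of $X$ lies in $\ker(D-XB)$; since $X\neq 0$ has a nonzero column, the square matrix $D-XB$ has a nontrivial kernel, hence $\det(D-XB)=0$ and $\det(M)=0$. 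In case (2) the equation $XA=-XBX$ rewrites as $X(A+BX)=0$, so $\operatorname{Im}(A+BX)\subseteq\ker(X)$; since $X\neq 0$, $\ker(X)$ is a proper subspace of $\K^{s}$, forcing $A+BX$ to fail surjectivity and hence be singular.

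Cases (3) and (4) are the mirror images obtained by using Theorem~\ref{th:main_prime} in place of Theorem~\ref{th:main}. In case (3) the hypotheses $B=YD$ and $AY=YCY$ give $-AY+YCY=0$, so $B=YD-AY+YCY$; in case (4) the hypotheses $B=-AY$ and $YD=-YCY$ give $YD+YCY=0$, which again yields $B=YD-AY+YCY$. Theorem~\ref{th:main_prime} then supplies the factorization $\det(M)=\det(A-YC)\det(D+CY)$. In case (3), $AY=YCY$ rewrites as $(A-YC)Y=0$, and since $Y\neq 0$ has a nonzero column we conclude that $A-YC$ is singular. In case (4), $YD=-YCY$ rewrites as $Y(D+CY)=0$, so $\operatorname{Im}(D+CY)\subseteq\ker(Y)\subsetneq\K^{n-s}$, forcing $D+CY$ to be singular. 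In every case one of the two factors in the determinant factorization vanishes, giving $\det(M)=0$.

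There is really no serious obstacle here: the bookkeeping is straightforward once the two equations in each case are combined into the single Riccati-type hypothesis of Theorem~\ref{th:main} or Theorem~\ref{th:main_prime}. The only point requiring any care is making sure the sign/placement of $X$ (or $Y$) in the two hypotheses matches up with $C=XA-DX+XBX$ (or $B=YD-AY+YCY$), and that from $X\neq 0$ one reads the correct consequence — a nontrivial column to feed into $(D-XB)X=0$ or $(A-YC)Y=0$, versus a proper kernel to absorb the image in $X(A+BX)=0$ or $Y(D+CY)=0$.
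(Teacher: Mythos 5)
Your proof is correct and follows essentially the same route as the paper: combine each pair of equations into the single hypothesis of Theorem~\ref{th:main} (or Theorem~\ref{th:main_prime}), then observe that $(D-XB)X=0$, $X(A+BX)=0$, $(A-YC)Y=0$ or $Y(D+CY)=0$ forces the corresponding factor to be singular. The paper treats cases (1) and (2) exactly this way and dismisses (3) and (4) as ``similar,'' which you have simply spelled out.
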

\begin{proof}
In the first case, observe that since $(D-XB)X=0$ and $X\neq 0$, then $D-XB$ is singular and hence $\det(D-XB)=0$. Applying Theorem \ref{th:main}, we obtain the desired result.

In the second case, observe that since $X(A+BX)=0$ and $X\neq 0$, then $A+BX$ is singular and hence $\det(A+BX)=0$. Again, applying Theorem \ref{th:main}, we obtain the desired result.
The rest of the cases are proved similarly.
\end{proof}

\section{Two useful examples}\label{se:exa}

As an application of some of the previous results, we obtain the following two co\-ro\-lla\-ries, which we needed to obtain the results in \cite{IMN2}.

\begin{corollary}\label{cor:Apm}
Let $n \in \N$ with $n \geq 2$ 
and consider the two $(n+1)\times (n+1)$ matrices
\begin{equation*}
\Atc=\begin{pmatrix}
t & \ce_0 & 0 & 0 & \cdots &0  & 0 & \pm \ce_n \\
\ce_0 & t & \ce_1 & 0 & \cdots &0  & 0 & 0 \\
0 & \ce_1 & t & \ce_2 & \cdots &0  & 0 & 0 \\
0 & 0 & \ce_2 & t & \cdots &0  & 0 & 0 \\
\vdots & \vdots & \vdots & \vdots & \ddots & \vdots & \vdots & \vdots \\
0 & 0 & 0 & 0  & \cdots & t & \ce_{n-2} & 0 \\
0 & 0 & 0 & 0 & \cdots & \ce_{n-2 }& t & \ce_{n-1} \\
\pm\ce_n & 0 & 0 & 0  & \cdots & 0 & \ce_{n-1} & t \end{pmatrix}.
\end{equation*}
\begin{itemize} 
\item If $n+1$ is odd, assume that $\ce_j=\ce_{n-j+1}$ for $j=1, 2, \dots, \frac{n}{2}$ (we make no assumption on $\ce_0$).
Then the determinant of $\Atc$ equals the product
\begin{equation*}
\det\begin{pmatrix}
t\pm \ce_0 & \ce_1 & 0 & \dots & 0 & 0 & 0 \\
\ce_1 & t & \ce_2 & \dots & 0 & 0 & 0 &  \\
0 & \ce_2 & t & \dots  & 0  & 0 & 0  \\
\vdots & \vdots & \vdots & \ddots & \vdots & \vdots & \vdots \\
0 & 0 & 0 & \dots & t & \ce_{\frac{n}{2}-1} & 0 & \\
0 & 0 & 0 & \dots & \ce_{\frac{n}{2}-1} & t  &   \ce_{\frac{n}{2}} \\
0 & 0 & 0 & \dots & 0 & 2 \ce_{\frac{n}{2}} & t \\
\end{pmatrix}
\det\begin{pmatrix}
t\mp \ce_0 &\ce_{1} & 0 & \cdots & 0 & 0 \\
\ce_{1} & t & \ce_{2} & \cdots  & 0 & 0 \\
0 &  \ce_{2} & t & \cdots  & 0 & 0 \\
\vdots & \vdots & \vdots & \ddots & \vdots & \vdots \\
0 & 0 & 0 & \cdots & t & \ce_{\frac{n}{2}-1} \\
0 & 0 & 0 & \cdots & \ce_{\frac{n}{2}-1} & t 
\end{pmatrix}.
\end{equation*}
Observe that, if $n+1=3$, the matrix in the determinant in the right-hand-side of the expression above is the $1\times 1$ matrix $(t \mp \ce_0)$. 

\item If $n+1$ is even, assume that $\ce_j=\ce_{n-j+1}$ for $j=1, 2, \dots, \frac{n-1}{2}$ (we make no assumption on $\ce_0$, nor on $\ce_{\frac{n+1}{2}}$). 
Then the determinant of $\Atc$ equals the product
\begin{equation*}
    \det\begin{pmatrix}
t\pm\ce_0 & \ce_1 & 0 & \dots & 0 & 0 & 0 \\
\ce_1 & t & \ce_2 & \dots & 0 & 0 & 0 &  \\
0 & \ce_2
& t & \dots  & 0  & 0 & 0  \\
\vdots & \vdots & \vdots & \ddots & \vdots & \vdots & \vdots \\
0 & 0 & 0 & \dots & t & \ce_{\frac{n+1}{2}-2} & 0 & \\
0 & 0 & 0 & \dots & \ce_{\frac{n+1}{2}-2} & t  &   \ce_{\frac{n+1}{2}-1} \\
0 & 0 & 0 & \dots & 0 &  \ce_{\frac{n+1}{2}-1} & t+\ce_{\frac{n+1}{2}} \\
\end{pmatrix}
\det\begin{pmatrix}
t\mp\ce_0 & \ce_{1} & 0 & \dots & 0 & 0 & 0 \\
\ce_{1} & t & \ce_2 & \dots & 0 & 0 & 0 &  \\
0 & \ce_2
& t & \dots  & 0  & 0 & 0  \\
\vdots & \vdots & \vdots & \ddots & \vdots & \vdots & \vdots \\
0 & 0 & 0 & \dots & t & \ce_{\frac{n+1}{2}-2} & 0 & \\
0 & 0 & 0 & \dots & \ce_{\frac{n+1}{2}-2} & t  &   \ce_{\frac{n+1}{2}-1} \\
0 & 0 & 0 & \dots & 0 &  \ce_{\frac{n+1}{2}} & t-\ce_{\frac{n+1}{2}}
\end{pmatrix}.
\end{equation*}
\end{itemize}
\end{corollary}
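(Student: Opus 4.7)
The plan is to reduce to Theorem~\ref{th:sim_cen} followed by Theorem~\ref{th:det_censym}. The palindrome conditions $\ce_j=\ce_{n-j+1}$ encode the fact that the underlying $(n+1)$-cycle graph supporting $\Atc$ has a reflection $\sigma$ which fixes the edge $(1,2)$ (of weight $\ce_0$) and, depending on parity, fixes either an opposite vertex (when $n+1$ is odd) or an opposite edge of weight $\ce_{(n+1)/2}$ (when $n+1$ is even). Under $\sigma$ the matrix $\Atcp$ is invariant on the nose, whereas $\Atcm$ is invariant only up to a diagonal sign twist.

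First, I would reorder the basis by a permutation matrix $P$ placing the $\sigma$-pairs into mirror-image positions: list $1,3,4,\dots$ in the top half, then their $\sigma$-images in reverse order in the bottom half, with the $\sigma$-fixed vertex at the center when $n+1$ is odd. Writing $\tilde M = P^{-1}\Atc P$ in the block form of Lemma~\ref{le:char_centrosym}, I would apply Theorem~\ref{th:sim_cen} with $X=J$ for $\Atcp$ and $X=\Delta J$ for $\Atcm$, where $\Delta=\mathrm{diag}(1,\dots,1,-1)$ is a single-sign-flip matrix absorbing the sign from the corner. The hypotheses $XA=DX$ and $C=XBX$ (together with $w=Xx$ and $y=zX$ in the odd case) would then reduce precisely to the palindrome identities $\ce_j=\ce_{n-j+1}$, and $X$ is manifestly invertible; Theorem~\ref{th:sim_cen} thus supplies an explicit invertible $Q$ making $Q^{-1}\tilde MQ$ centrosymmetric.

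Next, I would apply Theorem~\ref{th:det_censym} to $Q^{-1}\tilde MQ$. For even $n+1$, the two factors $\det(A'+B'J)$ and $\det(A'-B'J)$ unfold, after the similarities, into the two $(n+1)/2\times(n+1)/2$ tridiagonal determinants of the corollary: the diagonals stay $t$, the off-diagonals are $\ce_1,\ce_2,\dots$ in order, and the two corners carry $\pm\ce_0$ and $\pm\ce_{(n+1)/2}$ with signs pinned down by the sign of $\Atc$ together with the sign pattern of $\Delta$. For odd $n+1$, the odd-size formula of Theorem~\ref{th:det_censym} yields the asymmetric factorization; the factor $2\,\ce_{n/2}$ appearing in the $(n/2+1)$-sized determinant comes directly from the ``$2y$'' term of that formula, where $y$ is the central row meeting the two middle edges, both of weight $\ce_{n/2}$.

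The hard part will be the sign bookkeeping through the two successive similarities by $P$ and $Q=\mathrm{diag}(I,XJ)$: one must track where each entry of $\Atc$ lands and with what sign, in order to confirm that the resulting centrosymmetric matrix has blocks that unfold into precisely the displayed determinants. Once this identification is correctly set up, the corollary follows immediately from Theorems~\ref{th:sim_cen} and \ref{th:det_censym}.
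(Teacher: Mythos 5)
Your strategy can probably be pushed through, but it is not the paper's argument, and the step you defer is where essentially all of the content sits. The paper's proof is a one-move argument: conjugate $\Atc$ by the cyclic shift permutation $Q$ (with a $-1$ inserted in the corner position for $\Atcm$). This sends the periodic Jacobi matrix with off-diagonal weights $\ce_0,\ce_1,\dots,\ce_{n-1}$ and corners $\pm\ce_n$ to the one with off-diagonal weights $\ce_1,\dots,\ce_n$ and corners $\pm\ce_0$, which under the palindrome hypothesis $\ce_j=\ce_{n-j+1}$ is \emph{literally} centrosymmetric in the standard basis order. Theorem~\ref{th:det_censym} then applies directly, and its blocks can be read off so that $A\pm BJ$ (and, in the odd case, the bordered matrix carrying the $2y$ row) are exactly the tridiagonal matrices displayed in the statement. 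No appeal to Theorem~\ref{th:sim_cen} is needed at all.

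Your folding-by-the-reflection route is plausible --- on the $4\times 4$ case the reordering $1,4,3,2$ with $X=\Delta J$, $\Delta=\mathrm{diag}(1,-1)$, does satisfy $XA=DX$ and $C=XBX$, and the resulting factors have the right determinants --- but it buys nothing and costs a lot. The mirror-pair reordering destroys the band structure, so after Theorem~\ref{th:det_censym} the factors $\det(A'\pm B'J)$ are determinants of matrices that are only permuted, sign-twisted relatives of the displayed tridiagonal factors. Closing that gap requires a further argument: an explicit unscrambling similarity, or the observation that a symmetric tridiagonal determinant depends only on the products of opposite off-diagonal entries, combined once more with the palindrome identities. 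You explicitly label this identification ``the hard part'' and do not carry it out, so the proposal as written is an outline rather than a proof; and the sign analysis for $\Atcm$ (why a single sign flip in $\Delta$ suffices for all $n$) is asserted rather than verified. If you want the short proof, shift cyclically first: the $\ce_0$ edge moves to the corner, centrosymmetry becomes visible, and the displayed factors fall out of Theorem~\ref{th:det_censym} with no bookkeeping.
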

\begin{proof}
We divide the proof in two cases.

\vskip0.2cm

\noindent {\em Case $\Atcp$:} First of all, let $Q$ be the $(n+1)\times (n+1)$ matrix:
\[
Q=\begin{pmatrix}
0 & 1 & 0 & 0 & \dots & 0 & 0 \\
0 & 0 & 1 & 0 & \dots & 0 & 0 \\
0 & 0 & 0 & 1 &  \dots & 0 & 0\\
0 & 0 & 0 & 0 &  \dots & 0 & 0\\
\vdots & \vdots & \vdots & \vdots & \ddots & \vdots & \vdots \\
0 & 0 & 0 & 0 & \dots & 0 & 1 \\
1 & 0 & 0 & 0 &  \dots & 0 & 0\\
\end{pmatrix}.
\]
It is straightforward to check that
\[
Q \Atcp Q^{-1} = \begin{pmatrix}
t & \ce_1 & 0 & 0 & \cdots &0  & 0 & \ce_0 \\
\ce_1 & t & \ce_2 & 0 & \cdots &0  & 0 & 0 \\
0 & \ce_2 & t & \ce_3 & \cdots &0  & 0 & 0 \\
0 & 0 & \ce_3 & t & \cdots &0  & 0 & 0 \\
\vdots & \vdots & \vdots & \vdots & \ddots & \vdots & \vdots & \vdots \\
0 & 0 & 0 & 0 & \cdots & t & \ce_{n-1} & 0 \\
0 & 0 & 0 & 0 & \cdots & \ce_{n-1 }& t & \ce_{n} \\
\ce_0 & 0 & 0 & 0  & \cdots & 0 & \ce_{n} & t 
\end{pmatrix},
\]
and hence $\det(Q \Atcp Q^{-1})= \det(\Atcp)$. 

By the hypothesis, $Q \Atcp Q^{-1}$ is centrosymmetric. We now apply Theorem~\ref{th:det_censym} to obtain the desired result.

\noindent {\em Case $\Atcm$:} In this case, let $Q$ be the $(n+1)\times (n+1)$ matrix:
\[
Q=\begin{pmatrix}
0 & 1 & 0 & 0 & \dots & 0 & 0 \\
0 & 0 & 1 & 0 & \dots & 0 & 0 \\
0 & 0 & 0 & 1 &  \dots & 0 & 0\\
0 & 0 & 0 & 0 &  \dots & 0 & 0\\
\vdots & \vdots & \vdots & \vdots & \ddots & \vdots & \vdots \\
0 & 0 & 0 & 0 & \dots & 0 & 1 \\
-1 & 0 & 0 & 0 &  \dots & 0 & 0\\
\end{pmatrix}.
\]
It is straightforward to check that
\[
Q \Atcm Q^{-1} = \begin{pmatrix}
t & \ce_1 & 0 & 0 & \cdots &0  & 0 & -\ce_0 \\
\ce_1 & t & \ce_2 & 0 & \cdots &0  & 0 & 0 \\
0 & \ce_2 & t & \ce_3 & \cdots &0  & 0 & 0 \\
0 & 0 & \ce_3 & t & \cdots &0  & 0 & 0 \\
\vdots & \vdots & \vdots & \vdots & \ddots & \vdots & \vdots & \vdots \\
0 & 0 & 0 & 0 & \cdots & t & \ce_{n-1} & 0 \\
0 & 0 & 0 & 0 & \cdots & \ce_{n-1 }& t & \ce_{n} \\
-\ce_0 & 0 & 0 & 0 & \cdots & 0 & \ce_{n} & t 
\end{pmatrix},
\]
and hence $\det(Q \Atcm Q^{-1})= \det(\Atcm)$. 

By the hypothesis, $Q \Atcm Q^{-1}$ is centrosymmetric.  We now apply Theorem~\ref{th:det_censym} to obtain the desired result.
\end{proof}

The last result of this paper is now an immediate consequence of Theorem~\ref{th:det_censym}.

\begin{corollary}\label{cor:Bpm}
Let $n \in \N$ with $n \geq 2$
and consider the two $(n+1)\times (n+1)$ matrices
\begin{equation*}
B^{\pm}=\begin{pmatrix}
t\pm \ce_0 & \ce_1 & 0 & 0 &  \cdots &0  & 0 & 0 \\
\ce_1 & t & \ce_2 & 0 & \cdots &0  & 0 & 0 \\
0 & \ce_2 & t & \ce_3 & \cdots &0  & 0 & 0 \\
0 & 0 & \ce_3 & t & \cdots &0  & 0 & 0 \\
\vdots & \vdots & \vdots & \vdots & \ddots & \vdots & \vdots & \vdots \\
0 & 0 & 0 & 0 &  \cdots & t & \ce_{n-1} & 0 \\
0 & 0 & 0 & 0 & \cdots & \ce_{n-1 }& t & \ce_{n} \\
0 & 0 & 0 & 0 & \cdots & 0 & \ce_{n} & t\pm \ce_0 
\end{pmatrix}.
\end{equation*}
\begin{itemize}
    \item If $n+1$ is odd, assume that $\ce_j=\ce_{n-j+1}$ for $j=1, 2, \dots, \frac{n}{2}$ (we make no assumption on $\ce_0$). Then the determinant of $B^\pm$ equals the product
\begin{equation*}
    \det\begin{pmatrix}
    t\pm\ce_0 & \ce_1 & 0 & \cdots & 0 & 0 \\
    \ce_1 & t & \ce_2 & \cdots & 0 & 0 \\
    0 & \ce_2 & t & \cdots & 0 & 0  \\
    \vdots & \vdots & \vdots & \ddots  &\vdots & \vdots \\
    0 & 0 & 0 & \cdots & t & \ce_{\frac{n}{2}-1} \\
    0 & 0 & 0 & \cdots &  \ce_{\frac{n}{2}-1} & t\\
    \end{pmatrix}
    \det\begin{pmatrix}
    t\pm\ce_0 & \ce_1 & 0 & \cdots & 0 & 0 & 0\\
    \ce_1 & t & \ce_2 & \cdots & 0 & 0 & 0\\
    0 & \ce_2 & t & \cdots & 0 & 0 & 0 \\
    \vdots & \vdots & \vdots & \ddots  &\vdots & \vdots & \vdots\\
    0 & 0 & 0 & \cdots & t & \ce_{\frac{n}{2}-1} & 0\\
    0 & 0 & 0 & \cdots &  \ce_{\frac{n}{2}-1} & t & \ce_\frac{n}{2}\\
    0 & 0 & 0 & \cdots & 0 & 2 \ce_{\frac{n}{2}} & t\\
    \end{pmatrix}.
\end{equation*}
Observe that, if $n+1=3$, the matrix in the determinant in the left-hand-side of the expression above is the $1\times 1$ matrix $(t \pm \ce_0)$. 

\item If $n+1$ is even, assume that $\ce_j=\ce_{n-j+1}$ for $j=1, 2, \dots, \frac{n-1}{2}$ (we make no assumption on $\ce_0$, nor on $\ce_{\frac{n+1}{2}}$). Then the determinant of $B^\pm$ equals the product
\[   
\det\begin{pmatrix}
    t\pm\ce_0 & \ce_1 & 0 & \cdots & 0 & 0 \\
    \ce_1 & t & \ce_2 & \cdots & 0 & 0 \\
    0 & \ce_2 & t & \cdots & 0 & 0  \\
    \vdots & \vdots & \vdots & \ddots  &\vdots & \vdots \\
    0 & 0 & 0 & \cdots & t & \ce_{\frac{n-1}{2}} \\
    0 & 0 & 0 & \cdots &  \ce_{\frac{n-1}{2}} & t+\ce_{\frac{n+1}{2}}\\
    \end{pmatrix}
        \det\begin{pmatrix}
    t\pm\ce_0 & \ce_1 & 0 & \cdots & 0 & 0 \\
    \ce_1 & t & \ce_2 & \cdots & 0 & 0 \\
    0 & \ce_2 & t & \cdots & 0 & 0  \\
    \vdots & \vdots & \vdots & \ddots  &\vdots & \vdots \\
    0 & 0 & 0 & \cdots & t & \ce_{\frac{n-1}{2}} \\
    0 & 0 & 0 & \cdots &  \ce_{\frac{n-1}{2}} & t- \ce_{\frac{n+1}{2}}\\
    \end{pmatrix}.
    \]
    \end{itemize}
\end{corollary}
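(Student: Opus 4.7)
The plan is to observe that, under the hypothesis on the $\ce_j$, the matrix $B^{\pm}$ is already centrosymmetric, and then apply Theorem~\ref{th:det_censym} directly. Unlike Corollary~\ref{cor:Apm}, no similarity transformation is required, which is exactly why the author remarks that this result is an immediate consequence of Theorem~\ref{th:det_censym}.

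First I would verify centrosymmetry, i.e.\ the identity $m_{i,j}=m_{n+2-i,n+2-j}$ for all $i,j$. Since the only nonzero entries of $B^{\pm}$ lie on the main, super-, and sub-diagonals, this reduces to three observations: the two corner diagonal entries $(1,1)$ and $(n+1,n+1)$ both equal $t\pm\ce_0$; all interior diagonal entries equal $t$; and the super-diagonal values must satisfy $\ce_i=\ce_{n+1-i}$ for $i=1,\dots,n$. This last equality is precisely the hypothesis $\ce_j=\ce_{n-j+1}$ after reindexing, the middle-index case (when $n+1$ is even) being a trivial self-identity on the unconstrained entry $\ce_{(n+1)/2}$.

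Having centrosymmetry, I would identify the blocks prescribed by Lemma~\ref{le:char_centrosym}. In the case $n+1=2m$ even, the upper-right $m\times m$ block $B$ has its only nonzero entry $\ce_m=\ce_{(n+1)/2}$ in its bottom-left corner; hence $BJ$ has $\ce_m$ in its bottom-right corner, and $A\pm BJ$ differ from the tridiagonal block $A$ only in the $(m,m)$ entry, which becomes $t\pm\ce_{(n+1)/2}$. These two matrices are precisely the factors displayed in the statement. In the case $n+1=2m+1$ odd, the off-diagonal blocks $B$ and $C$ vanish identically, so $A+BJ=A-BJ=A$; one reads off $x=(0,\dots,0,\ce_m)^{\T}$, $y=(0,\dots,0,\ce_m)$ (using $\ce_{m+1}=\ce_m$, the boundary case of the hypothesis at $j=m$), and $\mu=t$. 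The bordered $(m+1)\times(m+1)$ matrix
\[
\begin{pmatrix} A+BJ & x \\ 2y & \mu \end{pmatrix}
\]
of Theorem~\ref{th:det_censym} then matches the right-hand factor in the statement, while $A-BJ=A$ matches the left.

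With both factors matched on the nose, the claimed factorization follows immediately from Theorem~\ref{th:det_censym}. No computational obstacle is expected; once centrosymmetry of $B^{\pm}$ is recognized, the remainder of the argument is purely bookkeeping in block coordinates, and the special small case $n+1=3$ (where the $m\times m$ factor collapses to the $1\times 1$ matrix $(t\pm\ce_0)$) is handled automatically by this scheme.
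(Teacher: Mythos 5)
Your proposal is correct and follows exactly the route the paper intends: the paper gives no written proof for this corollary beyond the remark that it is ``an immediate consequence of Theorem~\ref{th:det_censym},'' and your argument --- checking that the symmetry hypothesis on the $\ce_j$ makes $B^{\pm}$ centrosymmetric and then identifying the blocks $A\pm BJ$ (and, in the odd case, $x$, $y$, $\mu$) with the displayed factors --- is precisely the bookkeeping that remark leaves to the reader. The only nitpick is that the equality $\ce_{m+1}=\ce_m$ is what makes $w=Jx$ and $y=zJ$ hold (i.e.\ it is needed for centrosymmetry, which you had already verified), not for reading off the entry of $y$ itself; this does not affect the correctness of the argument.
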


\end{document}